\providecommand{\U}[1]{\protect\rule{.1in}{.1in}}
\providecommand{\U}[1]{\protect\rule{.1in}{.1in}}
\newtheorem{theorem}{Theorem}[section]
\numberwithin{equation}{section}
\begin{document}
\title[Hardy--Littlewood inequalities for multipolynomials]{Hardy--Littlewood inequalities for multipolynomials}
\author[Daniel Tomaz]{Daniel Tomaz}
\address{Departamento de Matem\'{a}tica \\
\indent
	Universidade Federal da Para\'{\i}ba \\
\indent
	58.051-900 - Jo\~{a}o Pessoa, Brazil}
\email{danieltomazmatufpb@gmail.com}
\thanks{2010 Mathematics Subject Classification: 47L22, 46G25}
\thanks{The author is supported by Capes}
\keywords{Polynomials, Banach spaces, Hardy-Littlewood inequalities, multipolynomials}

\begin{abstract}
The notion of multipolynomials was recently introduced and explored by T.
Velanga in \cite{Velanga} as an attempt to encompass the theories of polynomials and multilinear operators. In the present paper we push this subject further, by proving Hardy--Littlewood inequalities for multipolynomials and, \textit{en passant}, a variant
of the Kahane--Salem--Zygmund inequality in this framework.

\end{abstract}
\maketitle



\section{Introduction}

Throughout this paper, $E_{1},\ldots,E_{m},F$ will be Banach spaces over the
scalar field $\mathbb{K}$, which can be either $\mathbb{R}$ or $\mathbb{C}$.
We recall that a map $P:E\longrightarrow F$ is called $m$-homogeneous
polynomial when there is an $m$-linear operator%
\[
T:E\times\cdots\times E\longrightarrow F
\]
such that%
\[
P\left(  x\right)  :=T\left(  x,\ldots,x\right)
\]
for all $x\in E$. As usual, the norm of $P$ is defined by
\[
\left\Vert P\right\Vert :=\sup_{\left\Vert x\right\Vert \leq1}\left\Vert
P\left(  x\right)  \right\Vert .
\]
Given $\alpha=\left(  \alpha_{1},\ldots,\alpha_{n}\right)  \in\mathbb{N}^{n},$
we also define $\left\vert \alpha\right\vert :=\alpha_{1}+\cdots+\alpha_{n}$
and $\mathbf{x}^{\alpha}$ stands for the monomial $x_{1}^{\alpha_{1}}\cdots
x_{n}^{\alpha_{n}}$ for $\mathbf{x=}\left(  x_{1},\ldots,x_{n}\right)
\in\mathbb{K}^{n}$. Thereby, observe that an $m$-homogeneous polynomial in
$\mathbb{K}^{n}$ can be written as%
\[
P\left(  \mathbf{x}\right)  =%
{\textstyle\sum\limits_{\left\vert \alpha\right\vert =m}}
a_{\alpha}(P)\mathbf{x}^{\alpha},
\]
with $a_{\alpha}(P)\in\mathbb{K}$. Recently, T. Velanga (\cite{Velanga}; see also \cite{boo} for a related result)
introduced the notion of multipolynomials between Banach spaces. A map
$P:E_{1}\times\cdots\times E_{m}\longrightarrow F$ is called $\left(
n_{1},\ldots,n_{m}\right)  $-homogeneous polynomial if, for each
$i=1,\ldots,m,$ the mapping $P\left(  x_{1},\ldots,x_{i-1},\cdot
,x_{i+1},\ldots,x_{m}\right)  :E_{i}\longrightarrow F$ is an $n_{i}%
$-homogeneous polynomial for every $x_{1}\in E_{1},\ldots,x_{i-1}\in E_{i-1},$
$x_{i+1}\in E_{i+1},\ldots,x_{m}\in E_{m}$.

Given an integer $m\geq1$, the multilinear Hardy-Littlewood inequality for $2m\leq p\leq\infty$ asserts that there exists a constant $C_{\mathbb{K}%
\text{,}m,p}^{mult}\geq1$ such that, for all continuous $m$-linear forms
$T:\ell_{p}^{n}\times\cdots\times\ell_{p}^{n}\longrightarrow\mathbb{K}$ and
all positive integers $n,$%
\[
\left(
{\textstyle\sum\limits_{j_{1},\ldots,j_{m}=1}^{n}}
\left\vert T\left(  e_{j_{1}},\ldots,e_{j_{m}}\right)  \right\vert
^{\frac{2mp}{mp+p-2m}}\right)  ^{\frac{mp+p-2m}{2mp}}\leq C_{\mathbb{K}%
\text{,}m,p}^{mult}\text{ }\left\Vert T\right\Vert ,
\]
where%
\[
\left\Vert T\right\Vert :=\sup_{z^{\left(  1\right)  },\ldots,z^{\left(
n\right)  }\in B_{\ell_{p}^{n}}}\left\vert T\left(  z^{\left(  1\right)
},\ldots,z^{\left(  n\right)  }\right)  \right\vert .
\]
Moreover, the exponents $\frac{2mp}{mp+p-2m}$ are optimal. When $p=\infty$ we
recover the classical Bohnenblust-Hille inequality (see \cite{bh}). The
polynomial version of this inequality reads as follows: given $m,n\geq1,$ if
$P$ is a $m$-homogeneous polynomial on $\ell_{p}^{n}$ with $2m\leq p\leq
\infty$ given by $P\left(  \mathbf{x}\right)  =%
{\textstyle\sum\limits_{\left\vert \alpha\right\vert =m}}
a_{\alpha}(P)\mathbf{x}^{\alpha},$ then there exists a constant $C_{\mathbb{K}%
\text{,}m,p}^{pol}\geq1$ such that
\begin{equation}
\left(
{\textstyle\sum\limits_{\left\vert \alpha\right\vert =m}} \label{phl}
\left\vert a_{\alpha}\left(  P\right)  \right\vert ^{\frac{2mp}{mp+p-2m}%
}\right)  ^{\frac{mp+p-2m}{2mp}}\leq C_{\mathbb{K}\text{,}m,p}^{pol}\text{
}\left\Vert P\right\Vert .
\end{equation}
 The Hardy-Littlewood inequality for $m<p\leq2m$ (see \cite{Dimant}) asserts that there exists a constant $C_{\mathbb{K}%
 	\text{,}m,p}^{mult}\geq1$ such that for all continuous $m$-linear forms
 $T:\ell_{p}\times\cdots\times\ell_{p}\longrightarrow\mathbb{K}$ we have%
 \begin{equation}
 \left(
 {\displaystyle\sum\limits_{j_{1},\ldots,j_{m}=1}^{\infty}}
 \left\vert T\left(  e_{j_{1}},\ldots,e_{j_{m}}\right)  \right\vert ^{\frac
 	{P}{p-m}}\right)  ^{\frac{p-m}{p}}\leq C_{\mathbb{K}\text{,}m,p}%
 ^{mult}\left\Vert T\right\Vert.
 \end{equation}
 As in the previous case, there exists a polynomial version of this inequality, which asserts that, given an positive integer $m\geq2$ and $m<p\leq2m$, there exists a constant $C_{\mathbb{K}%
 	\text{,}m,p}^{pol}\geq1$ such that 
 \begin{equation}
 \left(
 {\textstyle\sum\limits_{\left\vert \alpha\right\vert =m}} \label{hlpo}
 \left\vert a_{\alpha}\left(  P\right)  \right\vert ^{\frac{p}{p-m}}\right)
 ^{\frac{p-m}{p}}\leq C_{\mathbb{K}\text{,}m,p}^{pol}\left\Vert P\right\Vert .	
 \end{equation} 
 for all $m$-homogeneous polynomial $P:\ell_{p}\longrightarrow\mathbb{K}.$ All the exponents involved are sharp and in \cite{qqq} the difference between the optimal exponents $2mp/(mp+p-2m)$ and $p/(p-m)$ is clearly justified; for recent developments in this field we refer to \cite{araujo, nunes, pt} and the references therein.

The main goal of this paper is to prove a Hardy--Littlewood inequality for multipolynomials.

\section{Preliminary results}

Let us begin by recalling the Kahane-Salem-Zygmund inequality (see
\cite{Albuquerque}):

\begin{theorem}
[Kahane-Salem-Zygmund inequality]\label{ksz}Let $m,n\geq1,$ $p\in\left[
1,+\infty\right]  ^{m}$, and, for  $q\geq1$, define
\[
\alpha\left(  q\right)  :=\left\{
\begin{array}
[c]{c}%
\frac{1}{2}-\frac{1}{q}\text{, if }q\geq2\\
0,\text{ otherwise}%
\end{array}
\right.  .
\]
Then there exists a $m$-linear map $A:\ell_{p}^{n}\times\cdots\times\ell
_{p}^{n}\longrightarrow\mathbb{K}$ of the form
\[
A\left(  x^{\left(  1\right)  },\ldots,x^{\left(  m\right)  }\right)  =%
{\textstyle\sum\limits_{i_{1},\ldots,i_{m}=1}^{n}}
\delta_{i_{1}...i_{m}}x_{i_{1}}^{\left(  1\right)  }\cdots x_{i_{m}}^{\left(
m\right)  },
\]
with $\delta_{i_{1}...i_{m}}\in\{-1,1\},$ such that
\[
\left\Vert A\right\Vert \leq C_{m}.n^{\frac{1}{2}+m\alpha\left(  p\right)  }.
\]

\end{theorem}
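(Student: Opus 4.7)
The plan is to follow the classical probabilistic approach. Choose the sign coefficients $\delta_{i_1\cdots i_m}\in\{-1,+1\}$ to be i.i.d.\ Bernoulli random variables and consider the resulting random $m$-linear form
\[
A_\omega(x^{(1)},\ldots,x^{(m)})=\sum_{i_1,\ldots,i_m=1}^{n}\delta_{i_1\cdots i_m}(\omega)\,x_{i_1}^{(1)}\cdots x_{i_m}^{(m)}.
\]
It suffices to show that, with strictly positive probability, $\|A_\omega\|\leq C_m\,n^{1/2+m\alpha(p)}$, since then some realization yields the desired deterministic $A$.

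The first step is a pointwise subgaussian estimate. For fixed $(x^{(1)},\ldots,x^{(m)})$ in the product of unit balls, $A_\omega(x^{(1)},\ldots,x^{(m)})$ is a sum of i.i.d.\ Rademacher variables weighted by $x_{i_1}^{(1)}\cdots x_{i_m}^{(m)}$, so Hoeffding's inequality yields
\[
\mathbb{P}\bigl(|A_\omega(x^{(1)},\ldots,x^{(m)})|>t\bigr)\leq 2\exp\!\left(-\frac{t^{2}}{2\sigma^{2}}\right),\qquad \sigma^{2}=\prod_{k=1}^{m}\|x^{(k)}\|_{2}^{2}.
\]
I would then bound the $\ell_2$-norms by the $\ell_{p_k}$-norms: when $p_k\geq 2$, Hölder's inequality gives $\|x^{(k)}\|_{2}\leq n^{1/2-1/p_k}\|x^{(k)}\|_{p_k}$, and when $p_k<2$, monotonicity of the $\ell^q$-norms gives $\|x^{(k)}\|_{2}\leq\|x^{(k)}\|_{p_k}$. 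In both cases $\|x^{(k)}\|_{2}\leq n^{\alpha(p_k)}$, hence $\sigma\leq n^{m\alpha(p)}$ under the shorthand $m\alpha(p):=\sum_k\alpha(p_k)$.

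The second step is to lift this pointwise estimate to a uniform bound via a net argument. Fix a $\tfrac{1}{2m}$-net $N_k\subset B_{\ell_{p_k}^{n}}$ of cardinality at most $(1+4m)^{n}$; a standard telescoping using multilinearity shows that $\|A_\omega\|\leq 2\sup_{N_1\times\cdots\times N_m}|A_\omega|$. A union bound over the at most $(1+4m)^{mn}$ such tuples combined with the Hoeffding estimate above yields
\[
\mathbb{P}\bigl(\|A_\omega\|>2t\bigr)\leq 2(1+4m)^{mn}\exp\!\left(-\frac{t^{2}}{2\,n^{2m\alpha(p)}}\right),
\]
which is strictly less than $1$ as soon as $t\geq C_m\,n^{1/2+m\alpha(p)}$ for a suitable $C_m$ depending only on $m$, finishing the argument.

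The main technical obstacle I anticipate is calibrating the net so that no extraneous $\log n$ factor appears. The key device is to take the mesh size to be a constant depending only on $m$ (not on $n$), so that the exponential-in-$n$ penalty from the union bound is balanced exactly against the subgaussian concentration, leaving the clean $n^{1/2}$ factor demanded by the theorem.
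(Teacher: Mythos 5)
Your proposal is correct, and there is nothing in the paper to compare it against: Theorem~\ref{ksz} is stated without proof, being quoted directly from \cite{Albuquerque}. The argument you give --- i.i.d.\ signs, the Hoeffding/subgaussian pointwise bound with variance proxy $\prod_{k}\|x^{(k)}\|_{2}^{2}$, the comparison $\|x^{(k)}\|_{2}\leq n^{\alpha(p_k)}\|x^{(k)}\|_{p_k}$ (H\"older for $p_k\geq 2$, monotonicity for $p_k<2$), and a net of constant mesh $\tfrac{1}{2m}$ combined with a union bound --- is precisely the standard probabilistic proof of the cited result, and each step checks out, including the calibration that avoids a $\log n$ loss. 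The only cosmetic caveats are the complex case (split the Rademacher sum into real and imaginary parts and take a net of the complex ball, of cardinality at most $(1+4m)^{2n}$, which only changes $C_m$) and the reading of the paper's loosely written exponent $m\alpha(p)$ as $\sum_{k}\alpha(p_k)$, which is indeed the intended meaning.
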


From results essentially found in (\cite{Bayart}), we can observe that for
$1\leq q\leq2$ the exponent in the above inequality can be improved.

In fact, let $T:\ell_{2}^{n}\times\cdots\times\ell_{2}^{n}\longrightarrow
\mathbb{K}$ be given by Theorem \ref{ksz}; i.e., $\left\Vert T\right\Vert
_{2}\leq n^{\frac{1}{2}}$. Note that when we consider the same multilinear
form in $\ell_{1}$, i.e., $T:\ell_{1}^{n}\times\cdots\times\ell_{1}%
^{n}\longrightarrow\mathbb{K}$, then $\left\Vert T\right\Vert _{1}=1.$ Hence,
for intermediate spaces $\ell_{q}$, by the Riesz-Thorin Theorem we have
$T:\ell_{q}^{n}\times\cdots\times\ell_{q}^{n}\longrightarrow\mathbb{K}$ \ with
$\left\Vert T\right\Vert _{q}\leq\left\Vert T\right\Vert _{2}^{\theta
}.\left\Vert T\right\Vert _{1}^{1-\theta}$ for $0\leq\theta\leq1$ and
\[
\theta=\frac{2q-2}{q}.
\]
Hence%
\[
\left\Vert T\right\Vert _{q}\leq n^{1-\frac{1}{q}}.
\]

So, as a matter of fact the  Kahane-Salem-Zygmund inequality can be stated in a more precise form as follows:

\begin{theorem} Let $n,m$  be positive integers and $p\geq1$. Then there exists a $m$-linear form $A:\ell_{p}%
^{n}\times\cdots\times\ell_{p}^{n}\longrightarrow\mathbb{K}$ \ of the form
\[
A\left(  x^{\left(  1\right)  },\ldots,x^{\left(  m\right)  }\right)  =%
{\textstyle\sum\limits_{i_{1},\ldots,i_{m}=1}^{n}}
\delta_{i_{1}...i_{m}}x_{i_{1}}^{\left(  1\right)  }\cdots x_{i_{m}}^{\left(
m\right)  },
\]
with $\delta_{i_{1}...i_{m}}\in\{-1,1\},$ such that
\[
\left\Vert A\right\Vert \leq C_{m}n^{\max\left\{  m\left(  \frac{1}{2}%
-\frac{1}{p}\right)  +\frac{1}{2}  ,1-\frac{1}{p}\right\}  }.
\]

\end{theorem}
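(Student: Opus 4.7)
The plan is to apply Theorem~\ref{ksz} to produce a candidate $m$-linear form and then extract the two branches of the $\max$ from two different estimates of its norm. Let $A\colon \mathbb{K}^n\times\cdots\times\mathbb{K}^n\to\mathbb{K}$ be the $m$-linear form with $\pm 1$ coefficients furnished by Theorem~\ref{ksz} when applied at $p=2$; concretely,
\[
A\bigl(x^{(1)},\ldots,x^{(m)}\bigr)=\sum_{i_1,\ldots,i_m=1}^{n}\delta_{i_1\cdots i_m}\,x^{(1)}_{i_1}\cdots x^{(m)}_{i_m},
\]
and Theorem~\ref{ksz} yields $\|A\colon \ell_2^n\times\cdots\times\ell_2^n\to\mathbb{K}\|\leq C_m n^{1/2}$. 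This is the single form I would use throughout: the point is that \emph{the same} $A$ will be shown to satisfy both bounds appearing in the $\max$.

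For $p\geq 2$ I would simply invoke Theorem~\ref{ksz} directly (with the given $p$). With $\alpha(p)=\tfrac{1}{2}-\tfrac{1}{p}$ it gives
\[
\bigl\|A\colon \ell_p^n\times\cdots\times\ell_p^n\to\mathbb{K}\bigr\|\leq C_m n^{m(\frac{1}{2}-\frac{1}{p})+\frac{1}{2}},
\]
which is the first branch of the $\max$. For $1\leq p\leq 2$ I would interpolate, following the discussion just preceding the theorem. The key trivial endpoint is that, since $|\delta_{i_1\cdots i_m}|=1$,
\[
\bigl\|A\colon \ell_1^n\times\cdots\times\ell_1^n\to\mathbb{K}\bigr\|\leq 1,
\]
because $|A(x^{(1)},\ldots,x^{(m)})|\leq \|x^{(1)}\|_1\cdots\|x^{(m)}\|_1$. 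Multilinear Riesz--Thorin interpolation between this $\ell_1$-endpoint and the $\ell_2$-endpoint $C_m n^{1/2}$, using the parameter $\theta=\tfrac{2p-2}{p}\in[0,1]$ (so that $\tfrac{1}{p}=(1-\theta)+\tfrac{\theta}{2}$), then gives
\[
\bigl\|A\colon \ell_p^n\times\cdots\times\ell_p^n\to\mathbb{K}\bigr\|\leq \bigl(C_m n^{1/2}\bigr)^{\theta}\cdot 1^{1-\theta}= C_m^{\theta}\, n^{1-\frac{1}{p}},
\]
which is the second branch. Taking the better of the two bounds according to $p$ and enlarging $C_m$ if needed produces the stated estimate.

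Only one step requires any care: the application of Riesz--Thorin to multilinear scalar-valued forms, which is standard but deserves a sentence of justification (one argues one coordinate at a time, applying the classical operator interpolation theorem to the linear map $x^{(k)}\mapsto A(x^{(1)},\ldots,x^{(k)},\ldots,x^{(m)})$ with the other variables frozen in the unit balls, as in~\cite{Bayart}). Observing that the two exponents $m(\tfrac{1}{2}-\tfrac{1}{p})+\tfrac{1}{2}$ and $1-\tfrac{1}{p}$ coincide at $p=2$ (both equal $\tfrac{1}{2}$) and differ by $(m-1)(\tfrac{1}{2}-\tfrac{1}{p})$ in general confirms that the case split at $p=2$ reproduces exactly $\max\{m(\tfrac{1}{2}-\tfrac{1}{p})+\tfrac{1}{2},\,1-\tfrac{1}{p}\}$, completing the proof.
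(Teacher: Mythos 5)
Your proposal is correct and follows essentially the same route as the paper, which likewise obtains the bound $n^{1-\frac{1}{p}}$ for $1\leq p\leq 2$ by Riesz--Thorin interpolation between the trivial $\ell_1$ endpoint and the $\ell_2$ Kahane--Salem--Zygmund estimate $C_m n^{1/2}$, and uses Theorem~\ref{ksz} directly for $p\geq 2$. The only cosmetic slip is your opening claim that a single form $A$ (built at $p=2$) is used throughout, whereas for $p\geq 2$ you in fact invoke Theorem~\ref{ksz} anew with the given $p$; since the statement only asserts existence for each $p$, this does not affect the argument.
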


We can now state a polynomial version of the Kahane-Salem-Zygmund inequality that is probably known but we
were not able to find in the literature:

\begin{theorem}
\label{poly} Let $n,m$ be positive integers and $p\geq1$. Then there exists a
$m$-homogeneous polynomial $P:\ell_{p}\longrightarrow\mathbb{K}$ \ of the
form
\begin{equation}
P\left(  x\right)  =%
{\textstyle\sum\limits_{i_{1},\ldots,i_{m}=1}^{\infty}}
\delta_{i_{1},\ldots,i_{m}}x_{i_{1}}\cdots x_{i_{m}} \label{7t}%
\end{equation}
with $\delta_{i_{1},\ldots,i_{m}}\in\left\{  0,1,-1\right\}  $ such that
\[
\left\Vert P\right\Vert \leq C_{m}n^{\max\left\{  m\left(  \frac{1}{2}%
-\frac{1}{p}\right)  +\frac{1}{2}  ,1-\frac{1}{p}\right\}  }%
\]
and%
\[
card\left\{  \delta_{i_{1},\ldots,i_{m}}\neq0\right\}  =n^{m}.
\]

\end{theorem}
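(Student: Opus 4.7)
The plan is to reduce to the multilinear Kahane--Salem--Zygmund inequality (the preceding theorem) by the standard \emph{disjoint blocks} trick, which converts a multilinear form into a polynomial by forcing each variable to live in its own coordinate block. Let $A:\ell_{p}^{n}\times\cdots\times\ell_{p}^{n}\to\mathbb{K}$ be the $m$-linear form furnished by the previous theorem, with signs $\delta_{i_{1}\ldots i_{m}}\in\{-1,1\}$ and
\[
\|A\|\leq C_{m}n^{\max\{m(\frac{1}{2}-\frac{1}{p})+\frac{1}{2},\,1-\frac{1}{p}\}}.
\]

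First I would split the index set $\{1,\ldots,mn\}\subset\mathbb{N}$ into $m$ pairwise disjoint blocks $S_{1},\ldots,S_{m}$ of size $n$, say $S_{k}:=\{(k-1)n+1,\ldots,kn\}$, and fix bijections $\varphi_{k}:\{1,\ldots,n\}\to S_{k}$. Then I would define coefficients $\widetilde{\delta}_{j_{1},\ldots,j_{m}}:=\delta_{\varphi_{1}^{-1}(j_{1}),\ldots,\varphi_{m}^{-1}(j_{m})}$ whenever $(j_{1},\ldots,j_{m})\in S_{1}\times\cdots\times S_{m}$, and $\widetilde{\delta}_{j_{1},\ldots,j_{m}}:=0$ otherwise, and set
\[
P(x):=\sum_{j_{1},\ldots,j_{m}=1}^{\infty}\widetilde{\delta}_{j_{1},\ldots,j_{m}}x_{j_{1}}\cdots x_{j_{m}}.
\]
Since the $S_{k}$ are pairwise disjoint, the tuple $(j_{1},\ldots,j_{m})\in S_{1}\times\cdots\times S_{m}$ is uniquely recoverable from the monomial $x_{j_{1}}\cdots x_{j_{m}}$, so no cancellation can occur between tuples; this guarantees that exactly $n^{m}$ of the $\widetilde{\delta}$'s are nonzero, as required by the statement.

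For the norm estimate, I would fix $x\in B_{\ell_{p}}$ and form the vectors $y^{(k)}:=(x_{\varphi_{k}(i)})_{i=1}^{n}\in\ell_{p}^{n}$. Disjointness of the blocks gives $\sum_{k=1}^{m}\|y^{(k)}\|_{p}^{p}\leq\|x\|_{p}^{p}$, and in particular $\|y^{(k)}\|_{p}\leq 1$ for each $k$. A direct reindexing then identifies
\[
P(x)=A\bigl(y^{(1)},\ldots,y^{(m)}\bigr),
\]
so the multilinear bound on $\|A\|$ yields $|P(x)|\leq C_{m}n^{\max\{m(\frac{1}{2}-\frac{1}{p})+\frac{1}{2},\,1-\frac{1}{p}\}}$, which is the desired estimate for $\|P\|$.

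There is no substantial obstacle: the whole argument is bookkeeping once the block decomposition is in place. The only points requiring care are (i) verifying that the disjointness of the $S_{k}$ simultaneously gives the injectivity needed for the cardinality assertion and the inequality $\|y^{(k)}\|_{p}\leq\|x\|_{p}$ needed for the norm estimate, and (ii) noting that $P$ is well defined on all of $\ell_{p}$ (not merely on a finite-dimensional section) because only finitely many coefficients $\widetilde{\delta}$ are nonzero.
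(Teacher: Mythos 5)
Your proposal is correct and follows essentially the same route as the paper: both reduce to the multilinear Kahane--Salem--Zygmund form via the disjoint-block substitution, so that $P(x)=A(y^{(1)},\ldots,y^{(m)})$ with each $y^{(k)}$ the restriction of $x$ to its own block, giving $\Vert P\Vert\leq\Vert A\Vert$ and exactly $n^{m}$ nonzero coefficients. The only cosmetic difference is that the paper partitions all of $\mathbb{N}$ into $m$ infinite blocks while you use finite blocks of size $n$; your write-up is in fact a bit more careful about the injectivity/cardinality bookkeeping.
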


\begin{proof}
By the previous theorem, there exists a $m$-linear
form
\[
T_{n}:\ell_{p}^{n}\times\cdots\times\ell_{p}^{n}\longrightarrow\mathbb{K}%
\]
of the form
\[
T_{n}\left(  x^{\left(  1\right)  },\ldots,x^{\left(  m\right)  }\right)  =%
{\textstyle\sum\limits_{i_{1},\ldots,i_{m}=1}^{n}}
\pm x_{i_{1}}^{\left(  1\right)  }\cdots x_{i_{m}}^{\left(  m\right)  }%
\]
such that
\[
\left\Vert T_{n}\right\Vert \leq C_{m}.n^{\max\left\{  m\left(  \frac{1}%
{2}-\frac{1}{p}\right)  +\frac{1}{2}  ,1-\frac{1}{p}\right\}  }.
\]

Consider $P:\ell_{p}\longrightarrow\mathbb{K}$ defined by
\[
P\left(  x\right)  =T_{n}\left(  \left(  x_{j}\right)  _{j\in\mathbb{N}_{1}%
},\ldots,\left(  x_{j}\right)  _{j\in\mathbb{N}_{m}}\right)  ,
\]
where%
\[
\mathbb{N}=\mathbb{N}_{1}%
{\textstyle\bigcup}
\cdots%
{\textstyle\bigcup}
\mathbb{N}_{m}%
\]
is a disjoint union, with $card\left(  \mathbb{N}_{j}\right)  =card\left(
\mathbb{N}\right)  $, for all $j=1,\ldots,m$, and%
\[
\left(  x_{j}\right)  _{j\in\mathbb{N}_{k}}:=\left\{
\begin{array}
[c]{c}%
x_{j},\text{if }j\in\mathbb{N}_{k}\\
0,\text{ otherwise.}%
\end{array}
\right.
\]
Note that $P$ is a $m$-homogeneous polynomial as in (\ref{7t}) and, moreover,
$\left\Vert P\right\Vert \leq\left\Vert T_{n}\right\Vert .$ By the estimate
obtained from the multilinear version, we have
\[
\left\Vert P\right\Vert \leq C_{m}n^{\max\left\{  m\left(  \frac{1}{2}%
-\frac{1}{p}\right)  +\frac{1}{2}  ,1-\frac{1}{p}\right\}  }.
\]

\end{proof}

A similar argument gives us a version of the Kahane--Salem--Zygmud inequality
for multipolynomials that will be useful in the next section.

\begin{theorem}
\label{mult}Let $n,$ $m,n_{1},\ldots,n_{m}$ be positive integers and $p\geq1$.
Then, there exists a $\left(  n_{1},\ldots,n_{m}\right)  $-homogeneous
polynomial $Q:\ell_{p}\times\cdots\times\ell_{p}\longrightarrow\mathbb{K}$
\ of the form
\[
Q\left(  x^{\left(  1\right)  },\ldots,x^{\left(  m\right)  }\right)  =%
{\textstyle\sum\limits_{i_{1},\ldots,i_{m}=1}^{\infty}}
\delta_{i_{1},\ldots,i_{m}}x_{i_{1}}^{\left(  1\right)  }\cdots x_{i_{m}%
}^{\left(  m\right)  },
\]
with $\delta_{i_{1},\ldots,i_{m}}\in\left\{  0,1,-1\right\}  $ and
$card\left\{  \left(  i_{1},...,i_{m}\right)  :\delta_{i_{1},\ldots,i_{m}}%
\neq0\right\}  =n^{M}$ such that
\[
\left\Vert Q\right\Vert \leq C_{n_{1}+\cdots+n_{m}}.n^{\max\left\{  M\left(
\frac{1}{2}-\frac{1}{p}\right)  +\frac{1}{2}  ,1-\frac{1}{p}\right\}  }.
\]

\end{theorem}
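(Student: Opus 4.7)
The plan is to imitate the proof of Theorem~\ref{poly}, but starting from an $M$-linear Kahane--Salem--Zygmund form where $M=n_1+\cdots+n_m$ is the total degree of the multipolynomial, and then collapsing slots in groups of size $n_k$ while keeping the different variables supported on disjoint parts of $\mathbb{N}$.

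First I would apply the refined Kahane--Salem--Zygmund inequality (the theorem immediately preceding Theorem~\ref{poly}) with $M$ slots instead of $m$, obtaining an $M$-linear form
\[
T_n:\underbrace{\ell_p^n\times\cdots\times\ell_p^n}_{M\text{ times}}\longrightarrow\mathbb{K}
\]
of the form $T_n(z^{(1)},\ldots,z^{(M)})=\sum_{i_1,\ldots,i_M=1}^{n}\pm z_{i_1}^{(1)}\cdots z_{i_M}^{(M)}$ with
\[
\|T_n\|\leq C_{M}\,n^{\max\{M(\tfrac12-\tfrac1p)+\tfrac12,\ 1-\tfrac1p\}}.
\]
Note that $C_M=C_{n_1+\cdots+n_m}$, which matches the constant in the statement.

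Next, following the disjoint-union trick of Theorem~\ref{poly}, I would fix a partition $\mathbb{N}=\mathbb{N}_1\cup\cdots\cup\mathbb{N}_m$ into pairwise disjoint infinite subsets and, for each $x^{(k)}\in\ell_p$, put $y^{(k)}:=(x^{(k)}_j)_{j\in\mathbb{N}_k}$ (zeros elsewhere); identifying the first $n$ elements of each $\mathbb{N}_k$ with $\{1,\ldots,n\}$, I would then define
\[
Q(x^{(1)},\ldots,x^{(m)}):=T_n\bigl(\underbrace{y^{(1)},\ldots,y^{(1)}}_{n_1},\underbrace{y^{(2)},\ldots,y^{(2)}}_{n_2},\ldots,\underbrace{y^{(m)},\ldots,y^{(m)}}_{n_m}\bigr).
\]
Because $y^{(k)}$ appears linearly in exactly $n_k$ slots of the $M$-linear form $T_n$, the resulting $Q$ is $(n_1,\ldots,n_m)$-homogeneous, and its monomials come from the $n^M$ ordered index tuples of $T_n$. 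The crucial role of the disjointness of the $\mathbb{N}_k$ is that monomials arising from different groups of slots involve disjoint index sets, so no cancellation can happen between different groups; hence the nonzero coefficients indeed number $n^M$ and take values in $\{-1,0,1\}$.

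Finally I would verify the norm estimate. Since $\|y^{(k)}\|_{\ell_p}\leq\|x^{(k)}\|_{\ell_p}$ (restriction to a subset of coordinates cannot increase the $\ell_p$-norm), one gets $\|Q\|\leq\|T_n\|$ directly from the supremum definition, and the bound from step~1 completes the proof. The main potential obstacle is the bookkeeping in showing that the $n^M$-coefficient count survives after substituting the same vector into several slots; but the disjoint-support construction precisely ensures that the cross-factor monomials remain distinct, so only the intra-factor grouping (which is purely a matter of which ordered tuple of $M$ indices one writes, not of genuine coefficient collisions across different factors) matters, and the count $n^M$ is preserved.
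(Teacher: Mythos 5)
Your overall strategy (apply the $M$-linear Kahane--Salem--Zygmund form with $M=n_1+\cdots+n_m$ slots, substitute restrictions of the $x^{(k)}$ supported on disjoint parts of $\mathbb{N}$, and conclude $\|Q\|\leq\|T_n\|$) is the right one and matches the paper, but your substitution step has a genuine gap. You partition $\mathbb{N}$ into only $m$ pieces, one per variable, and feed the \emph{same} vector $y^{(k)}$ into all $n_k$ slots of the $k$-th group. Disjointness across groups does prevent cross-group collisions, but it does nothing about collisions \emph{within} a group: if, say, $n_1=2$, the ordered tuples $(a,b,\ldots)$ and $(b,a,\ldots)$ of $T_n$ both produce the monomial $y^{(1)}_a y^{(1)}_b$ in $Q$, so its coefficient is $\delta_{a,b,\ldots}+\delta_{b,a,\ldots}\in\{-2,0,2\}$. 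Since the KSZ signs are arbitrary, you cannot exclude cancellation, the coefficients need not lie in $\{0,1,-1\}$, and the number of nonzero monomials is strictly less than $n^M$ even before any cancellation (there are only about $\binom{n+n_1-1}{n_1}$ distinct degree-$n_1$ monomials per group, not $n^{n_1}$). This is fatal for the intended application in Theorem~\ref{multh}, where the identity $\sum_\alpha |c_\alpha(P_n)|^s=n^M$ is exactly what drives the lower bound.

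The fix is the one the paper uses: for each variable $x^{(i)}$ take $n_i$ \emph{pairwise disjoint} copies $\mathbb{N}_1^{(i)},\ldots,\mathbb{N}_{n_i}^{(i)}$ of $\mathbb{N}$ (so $M$ disjoint index sets in total, not $m$), and plug the $n_i$ \emph{different} restrictions $(x_j^{(i)})_{j\in\mathbb{N}_k^{(i)}}$, $k=1,\ldots,n_i$, into the $n_i$ slots of the $i$-th group. Then an index appearing in a monomial of $Q$ determines which slot it came from, the map from ordered $M$-tuples of $T_n$ to monomials of $Q$ is injective, every coefficient stays in $\{0,1,-1\}$, and the count $n^M$ is exact. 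The norm estimate $\|Q\|\leq\|T_n\|$ survives unchanged since each restriction still has $\ell_p$-norm at most $\|x^{(i)}\|_p$.
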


\begin{proof}
For each $n_{i}\geq1$ and $i=1,\ldots,m$, by the multilinear
Kahane-Salem-Zygmund inequality, there exists an $M$-linear form
\[
A:\overset{n_{1}}{\overbrace{\ell_{p}\times\cdots\times\ell_{p}}}\times
\cdots\times\overset{n_{m}}{\overbrace{\ell_{p}\times\cdots\times\ell_{p}}%
}\longrightarrow\mathbb{K}%
\]
of the form
\[
A\left(  x^{\left(  1\right)  },\ldots,x^{(M)}\right)  =%
{\textstyle\sum\limits_{i_{1},\ldots,i_{M}=1}^{n}}
\pm x_{i_{1}}^{\left(  1\right)  }\cdots x_{i_{M}}^{\left(  M\right)  }%
\]
such that
\[
\left\Vert A\right\Vert \leq C_{M}.n^{\max\left\{  M\left(  \frac{1}{2}%
-\frac{1}{p}\right)  +\frac{1}{2}  ,1-\frac{1}{p}\right\}  },
\]
where  $M=%
{\textstyle\sum\nolimits_{j=1}^{m}}
n_{j.}.$

Hence, consider $Q:\overset{m}{\overbrace{\ell_{p}\times\cdots\times\ell_{p}}%
}\longrightarrow\mathbb{K}$ given by%
\[
Q\left(  x^{\left(  1\right)  },\ldots,x^{\left(  m\right)  }\right)
=A\left(  \left(  x_{j}^{(1)}\right)  _{j\in\mathbb{N}_{1}^{(1)}}%
,\ldots,\left(  x_{j}^{(1)}\right)  _{j\in\mathbb{N}_{n_{1}}^{\left(
1\right)  }},...,\left(  x_{j}^{(m)}\right)  _{j\in\mathbb{N}_{1}^{(m)}%
},\ldots,\left(  x_{j}^{(m)}\right)  _{j\in\mathbb{N}_{n_{m}}^{\left(
m\right)  }}\right)
\]
with
\begin{align*}
\mathbb{N}  &  =\mathbb{N}_{1}^{\left(  1\right)  }%
{\textstyle\bigcup}
\cdots%
{\textstyle\bigcup}
\mathbb{N}_{n_{1}}^{\left(  1\right)  }\\
&  \vdots\\
\mathbb{N}  &  =\mathbb{N}_{1}^{\left(  m\right)  }%
{\textstyle\bigcup}
\cdots%
{\textstyle\bigcup}
\mathbb{N}_{n_{m}}^{\left(  m\right)  }%
\end{align*}
disjoint unions, with $card\left(  \mathbb{N}_{k}^{\left(  i\right)  }\right)
=card\left(  \mathbb{N}\right)  $, for all $i=1,\ldots,m$ and all
$k=1,\ldots,n_{i}.$ Note that $Q$ is a $\left(  n_{1},\ldots,n_{m}\right)
$-homogeneous polynomial and, moreover,
\[
\left\Vert Q\right\Vert \leq\left\Vert A\right\Vert \leq C_{M}.n^{\max\left\{
M\left(  \frac{1}{2}-\frac{1}{p}\right)  +\frac{1}{2}  ,1-\frac{1}{p}\right\}  }.
\]

\end{proof}

\section{Hardy--Littlewood inequality for multipolynomials: first case}
In this section we present a version of the  Hardy-Littlewood inequality for homogeneous multipolynomials.
\newline Note that, a $\left(  n_{1},\ldots,n_{m}\right)  $-homogeneous polynomial
$P:\ell_{p}\times\cdots\times\ell_{p}\longrightarrow\mathbb{K}$ can always be
written as%
\[
P\left(  x^{\left(  1\right)  },\ldots,x^{\left(  m\right)  }\right)
=\sum\limits_{\left\vert \alpha^{\left(  1\right)  }\right\vert =n_{1}%
,\ldots,\left\vert \alpha^{\left(  m\right)  }\right\vert =n_{m}}%
c_{\alpha^{\left(  1\right)  }\ldots\alpha^{\left(  m\right)  }}\left(
P\right)  \left(  x^{\left(  1\right)  }\right)  ^{\alpha^{\left(  1\right)
}}\cdots\left(  x^{\left(  m\right)  }\right)  ^{\alpha^{\left(  m\right)  }%
},
\]
where $\left(  \alpha_{j}^{\left(  i\right)  }\right)  _{j=1}^{\infty}$ is a
sequence in $\mathbb{N}\cup\left\{  0\right\}  ,$
\[
\left\vert \alpha^{\left(  i\right)  }\right\vert =%
{\textstyle\sum\limits_{j=1}^{\infty}}
\alpha_{j}^{\left(  i\right)  }%
\]
and
\[
\left(  x^{\left(  i\right)  }\right)  ^{\alpha^{\left(  i\right)  }}=%
{\textstyle\prod\limits_{j=1}^{\infty}}
\left(  x_{j}^{\left(  i\right)  }\right)  ^{\alpha_{j}^{\left(  i\right)  }%
},
\]
for $i=1,\ldots,m$. Now we prove our main result:

\begin{theorem}
\label{multh} Let $m,n_{1}%
,\ldots,n_{m}$ be positive integers, $s\in(0,+\infty)$ and $p\geq2\left(
{\textstyle\sum\nolimits_{j=1}^{m}}
n_{j}\right)  $. The following assertions are equivalent:

$(i)$ There is a constant $C_{n_{1},\ldots,n_{m},s}^{\mathbb{K}}\geq1$ such
that
\[
\left(  \sum\limits_{\left\vert \alpha^{\left(  1\right)  }\right\vert
=n_{1},\ldots,\left\vert \alpha^{\left(  m\right)  }\right\vert =n_{m}%
}\left\vert c_{\alpha^{\left(  1\right)  }\ldots\alpha^{\left(  m\right)  }%
}\left(  P\right)  \right\vert ^{s}\right)  ^{\frac{1}{s}}\leq C_{n_{1}%
,\ldots,n_{m},s}^{\mathbb{K}}\text{ }\left\Vert P\right\Vert
\]
for all $\left(  n_{1},\ldots,n_{m}\right)  $-homogeneous polynomial
$P:\ell_{p}\times\cdots\times\ell_{p}\longrightarrow\mathbb{K}.$

$(ii)$%
\[
s\geq\frac{2\left(
{\textstyle\sum\nolimits_{j=1}^{m}}
n_{j}\right)  p}{\left(
{\textstyle\sum\nolimits_{j=1}^{m}}
n_{j}\right)  p+p-2\left(
{\textstyle\sum\nolimits_{j=1}^{m}}
n_{j}\right)  }.
\]

\end{theorem}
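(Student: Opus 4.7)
The plan is to prove the equivalence by two independent arguments, using the polynomial Hardy--Littlewood inequality \eqref{phl} for $(ii)\Rightarrow(i)$ and Theorem \ref{mult} (the Kahane--Salem--Zygmund estimate for multipolynomials) for $(i)\Rightarrow(ii)$. Throughout, set $N:=n_1+\cdots+n_m$, so that the hypothesis reads $p\geq 2N$ and the critical exponent in (ii) is $s_0:=\frac{2Np}{Np+p-2N}$.

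For $(ii)\Rightarrow(i)$ I would reduce to the single-variable polynomial case via the same disjoint-splitting trick used in the proofs of Theorems \ref{poly} and \ref{mult}. Partition $\mathbb N$ into $m$ disjoint subsets $\mathbb N_1,\ldots,\mathbb N_m$, each in bijection with $\mathbb N$ via some $\sigma_i$, and define
\[
Q(x):=P\bigl((x_j)_{j\in\mathbb N_1},\ldots,(x_j)_{j\in\mathbb N_m}\bigr),\qquad x\in\ell_p.
\]
Then $Q$ is an $N$-homogeneous polynomial on $\ell_p$ with $\|Q\|\leq\|P\|$, since each slice $(x_j)_{j\in\mathbb N_i}$ has $\ell_p$-norm at most $\|x\|_p$. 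Because the sets $\mathbb N_i$ are disjoint, the multi-index $\beta$ defined by $\beta_{\sigma_i(j)}:=\alpha^{(i)}_j$ establishes a bijection between the families $\{c_{\alpha^{(1)}\ldots\alpha^{(m)}}(P)\}$ and the nonzero coefficients $\{a_\beta(Q):|\beta|=N\}$. Applying \eqref{phl} to $Q$ in degree $N$ (whose hypothesis $p\geq 2N$ is exactly ours) yields (i) at the exponent $s_0$; the case $s>s_0$ follows from the monotonicity of $\ell^s$-sums in $s$.

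For $(i)\Rightarrow(ii)$ I would test (i) against the Kahane--Salem--Zygmund $(n_1,\ldots,n_m)$-homogeneous polynomial $Q$ supplied by Theorem \ref{mult}. That polynomial has $n^N$ nonzero coefficients, each of modulus $1$, and satisfies
\[
\|Q\|\leq C_N\,n^{\max\{N(\frac12-\frac1p)+\frac12,\,1-\frac1p\}}.
\]
Since $p\geq 2N\geq 2$, the elementary identity $N(\tfrac12-\tfrac1p)+\tfrac12-(1-\tfrac1p)=(N-1)(\tfrac12-\tfrac1p)\geq 0$ shows that the maximum equals $N(\tfrac12-\tfrac1p)+\tfrac12$. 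Substituting into (i) gives $n^{N/s}\leq C\,n^{N(\frac12-\frac1p)+\frac12}$, and letting $n\to\infty$ forces $\frac{N}{s}\leq N(\frac12-\frac1p)+\frac12$, which rearranges to the exponent lower bound in (ii).

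The main obstacle I anticipate is purely bookkeeping in the first direction: one must verify carefully that the splitting really identifies the full family $\{c_{\alpha^{(1)}\ldots\alpha^{(m)}}(P)\}$ with the nonzero coefficients of $Q$ without duplication, so that
\[
\sum_{|\beta|=N}|a_\beta(Q)|^{s_0}=\sum_{\substack{|\alpha^{(1)}|=n_1,\,\ldots,\,|\alpha^{(m)}|=n_m}}\bigl|c_{\alpha^{(1)}\ldots\alpha^{(m)}}(P)\bigr|^{s_0}.
\]
Once this correspondence is firmly in place, both implications reduce to direct invocations of the tools already established in the paper.
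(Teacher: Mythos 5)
Your proposal is correct and follows essentially the same route as the paper: the implication $(ii)\Rightarrow(i)$ via the disjoint-splitting reduction to an $N$-homogeneous polynomial on $\ell_p$ and the inequality \eqref{phl}, and $(i)\Rightarrow(ii)$ by testing against the Kahane--Salem--Zygmund multipolynomial of Theorem \ref{mult}. Your explicit verification that the maximum in the exponent equals $N(\tfrac12-\tfrac1p)+\tfrac12$ when $p\geq 2N$ is a small but welcome addition that the paper leaves implicit.
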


\begin{proof}
$(ii)\Rightarrow(i):$ It suffices to prove the assertion for
\[
s_{0}=\frac{2\left(
{\textstyle\sum\nolimits_{j=1}^{m}}
n_{j}\right)  p}{\left(
{\textstyle\sum\nolimits_{j=1}^{m}}
n_{j}\right)  p+p-2\left(
{\textstyle\sum\nolimits_{j=1}^{m}}
n_{j}\right)  }.
\]
Indeed, consider the $\left(  n_{1}+\cdots+n_{m}\right)  $-homogeneous
polynomial $\mathbf{Q}:\ell_{p}\longrightarrow\mathbb{K}$ given by
\[
\mathbf{Q}\left(  z\right)  :=P\left(  \left(  z_{j}\right)  _{j\in
\mathbb{N}_{1}},\ldots,\left(  z_{j}\right)  _{j\in\mathbb{N}_{m}}\right)  ,
\]
being $\mathbb{N}=\mathbb{N}_{1}%
{\textstyle\bigcup}
\cdots%
{\textstyle\bigcup}
\mathbb{N}_{m}$ is a disjoint union with $card\left(  \mathbb{N}_{j}\right)  =card\left(
\mathbb{N}\right)  $, for $j=1,\ldots,m$. Moreover, note that
\[
\left\Vert \mathbf{Q}\right\Vert \leq\left\Vert P\right\Vert
\]
and
\[%
{\textstyle\sum\limits_{\left\vert \beta\right\vert =n_{1}+\cdots+n_{m}}}
\left\vert c_{\beta}\left(  \mathbf{Q}\right)  \right\vert ^{s}=\sum
\limits_{\left\vert \alpha^{\left(  1\right)  }\right\vert =n_{1}%
,\ldots,\left\vert \alpha^{\left(  m\right)  }\right\vert =n_{m}}\left\vert
c_{\alpha^{\left(  1\right)  }\ldots\alpha^{\left(  m\right)  }}\left(
P\right)  \right\vert ^{s}%
\]
for all $s.$ Hence, by the polynomial Hardy-Littlewood inequality in
(\ref{phl}), there exists a constant $C_{\mathbb{K}\text{,}n_{1}+\cdots
+n_{m},s_{0}}^{pol}\geq1$ such that
\begin{align*}
\left(  \sum\limits_{\left\vert \alpha^{\left(  1\right)  }\right\vert
=n_{1},\ldots,\left\vert \alpha^{\left(  m\right)  }\right\vert =n_{m}%
}\left\vert c_{\alpha^{\left(  1\right)  }\ldots\alpha^{\left(  m\right)  }%
}\left(  P\right)  \right\vert ^{s_{0}}\right)  ^{\frac{1}{s_{0}}}  &
=\left(
{\textstyle\sum\limits_{\left\vert \beta\right\vert =n_{1}+\cdots+n_{m}}}
\left\vert c_{\beta}\left(  \mathbf{Q}\right)  \right\vert ^{s_{0}}\right)
^{\frac{1}{s_{0}}}\\
&  \leq C_{\mathbb{K}\text{,}n_{1}+\cdots+n_{m},s_{0}}^{pol}\text{ }\left\Vert
\mathbf{Q}\right\Vert \\
&  \leq C_{\mathbb{K}\text{,}n_{1}+\cdots+n_{m},s_{0}}^{pol}\text{ }\left\Vert
P\right\Vert 
\end{align*}
as we wanted.

$(i)\Rightarrow(ii):$ Let $M=%
{\textstyle\sum\nolimits_{j=1}^{m}}
n_{j.}.$ For all $n$, let
\begin{align*}
Q_{n}  &  :\ell_{p}\times\cdots\times\ell_{p}\longrightarrow\mathbb{K}\\
Q_{n}\left(  x^{\left(  1\right)  },\ldots,x^{\left(  M\right)  }\right)   &
=%
{\textstyle\sum\limits_{i_{1},\ldots,i_{M}=1}^{\infty}}
\delta_{i_{1},\ldots,i_{M}}x_{i_{1}}^{\left(  1\right)  }\cdots x_{i_{M}%
}^{\left(  M\right)  }%
\end{align*}
be the $\left(  n_{1},\ldots,n_{m}\right)  $-homogeneous polynomial given by
the multipolynomial Kahane-Salem-Zygmund inequality, i.e.,%
\[
\left\{
\begin{array}
[c]{c}%
\left\Vert Q_{n}\right\Vert \leq C_{M}.n^{\max\left\{  M\left(  \frac{1}%
{2}-\frac{1}{p}\right)  +\frac{1}{2}  ,1-\frac{1}{p}\right\}  }\\
\delta_{i_{1},\ldots,i_{M}}\in\left\{  0,1,-1\right\} \\
\left\{  \left(  i_{1},...,i_{M}\right)  :\delta_{i_{1},\ldots,i_{M}}%
\neq0\right\}  =n^{M}.
\end{array}
\right.
\]

Define $P_{n}:\ell_{p}\times\cdots\times\ell_{p}\longrightarrow\mathbb{K}$
given by%
\begin{align*}
&  P_{n}\left(  x^{\left(  1\right)  },\ldots,x^{\left(  m\right)  }\right) \\
&  =Q_{n}\left(  \underbrace{\left(  x_{j}^{\left(  1\right)  }\right)
_{j\in\mathbb{N}_{1}^{\left(  1\right)  }},\ldots,\left(  x_{j}^{\left(
1\right)  }\right)  _{j\in\mathbb{N}_{n_{1}}^{\left(  1\right)  }}}%
,\ldots,\underbrace{\left(  x_{j}^{\left(  m\right)  }\right)  _{j\in
\mathbb{N}_{1}^{\left(  m\right)  }},\ldots,\left(  x_{j}^{\left(  m\right)
}\right)  _{j\in\mathbb{N}_{n_{m}}^{\left(  m\right)  }}}\right)  .\\
&  \text{ \ \ \ \ \ \ \ \ \ \ \ \ \ \ \ \ \ \ \ \ \ \ \ \ \ \ \ }n_{1}\text{
\ \ \ \ \ \ \ \ \ \ \ \ \ \ \ \ \ \ \ \ \ \ \ \ \ \ \ \ \ \ \ \ \ \ \ \ \ }%
n_{m}%
\end{align*}
Note that $P_{n}$ is a $\left(  n_{1},\ldots,n_{m}\right)  $-homogeneous
polynomial and $\left\Vert P_{n}\right\Vert \leq\left\Vert Q_{n}\right\Vert $.
Moreover, as the polynomial above has exactly $n^{M}$ non zero monomials we
have
\[%
{\textstyle\sum\limits_{\left\vert \alpha\right\vert =n_{1}+\cdots+n_{m}}}
\left\vert c_{\alpha}\left(  P_{n}\right)  \right\vert ^{s}=%
{\textstyle\sum\limits_{i_{1},\ldots,i_{M}=1}^{\infty}}
\left\vert Q_{n}\left(  e_{i_{1}},\ldots,e_{i_{M}}\right)  \right\vert
^{s}=n^{M}.
\]
Again, by the multipolynomial Kahane-Salem-Zygmund inequality (Theorem
\ref{mult}), we have
\[
\left\Vert P_{n}\right\Vert \leq\left\Vert Q_{n}\right\Vert \leq
C_{n_{1}+\cdots+n_{m}}.n^{M\left(  \frac{1}{2}-\frac{1}{p}\right)  +\frac
{1}{2}}%
\]
and thus
\[
\left(  n^{M}\right)  ^{\frac{1}{s}}\leq C_{n_{1}+\cdots+n_{m}}.n^{\frac{1}%
{2}+M\left(  \frac{1}{2}-\frac{1}{p}\right)  }.
\]
Since $n$ is arbitrary, we get
\[
\frac{M}{s}\leq\frac{Mp+p-2M}{2p}%
\]
and thus $s\geq\frac{2Mp}{Mp+p-2M}.$
\end{proof}

\section{Hardy-Littlewood inequality for multipolynomials: second case}

Using similar arguments from the above section, the main goal of this section
is to prove a new version of the multipolynomial Hardy-Littlewood inequality for
the case $M<p\leq2M$, i.e, we extend \ref{hlpo}  to the  multipolynomials.

\begin{theorem}
Let $m,n_{1},\ldots,n_{m}$ positive integers, $s\in\left(  0,+\infty\right)  $ and
	$M=%
	{\textstyle\sum\limits_{j=1}^{m}}
	n_{j}.$ The following assertions are equivalent:
	
	$\left(  i\right)  $ There is a constant $C_{n_{1},\ldots,n_{m},s}%
	^{\mathbb{K}}\geq1$ such that
	\[
	\left(
	{\textstyle\sum\limits_{\left\vert \alpha^{\left(  1\right)  }\right\vert
			=n_{1},\ldots,\left\vert \alpha^{\left(  m\right)  }\right\vert =n_{m}}}
	\left\vert c_{\alpha^{\left(  1\right)  }\ldots\alpha^{\left(  m\right)  }%
	}\left(  P\right)  \right\vert ^{s}\right)  ^{\frac{1}{s}}\leq C_{n_{1}%
	,\ldots,n_{m},s}^{\mathbb{K}}\text{ }\left\Vert P\right\Vert
\]
for all $\left(  n_{1},\ldots,n_{m}\right)  $-homogeneous polynomial
$P:\ell_{p}\times\cdots\times\ell_{p}\longrightarrow\mathbb{K}$.

$\left(  ii\right)  $ $s\geq\frac{p}{p-M}$
\end{theorem}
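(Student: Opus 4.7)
The plan is to imitate the proof of Theorem~\ref{multh} with two substitutions: in the easy direction one uses the polynomial Hardy--Littlewood inequality (\ref{hlpo}), valid for $M<p\leq 2M$, in place of (\ref{phl}); in the sharpness direction one replaces the Kahane--Salem--Zygmund random multipolynomial by an explicit diagonal multipolynomial, since the Kahane--Salem--Zygmund construction is optimal only in the first-case range $p\geq 2M$ and underperforms here.

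For $(ii)\Rightarrow (i)$, as before it suffices to establish the inequality at the critical exponent $s_{0}:=p/(p-M)$. Given a $(n_{1},\ldots,n_{m})$-homogeneous multipolynomial $P:\ell_{p}\times\cdots\times\ell_{p}\longrightarrow\mathbb{K}$, I would form the $M$-homogeneous polynomial
\[
\mathbf{Q}(z):=P\bigl((z_{j})_{j\in\mathbb{N}_{1}},\ldots,(z_{j})_{j\in\mathbb{N}_{m}}\bigr),
\]
where $\mathbb{N}=\mathbb{N}_{1}\cup\cdots\cup\mathbb{N}_{m}$ is a disjoint union with $\mathrm{card}(\mathbb{N}_{k})=\mathrm{card}(\mathbb{N})$ for each $k$, exactly as in the proof of Theorem~\ref{multh}. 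The same elementary verification shows $\|\mathbf{Q}\|\leq \|P\|$ and that the sum of $s_{0}$-th powers of the coefficients of $\mathbf{Q}$ equals the corresponding sum for $P$, so applying (\ref{hlpo}) to $\mathbf{Q}$ (with degree $M$ in place of $m$) yields the claim.

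For $(i)\Rightarrow (ii)$ I would test $(i)$ on the explicit diagonal multipolynomial
\[
P_{n}\bigl(x^{(1)},\ldots,x^{(m)}\bigr):=\sum_{i=1}^{n}\bigl(x_{i}^{(1)}\bigr)^{n_{1}}\cdots\bigl(x_{i}^{(m)}\bigr)^{n_{m}},
\]
which has exactly $n$ non-zero coefficients, all equal to $1$. A generalized H\"older inequality with conjugate exponents $r_{k}:=M/n_{k}$ (so that $\sum_{k}1/r_{k}=1$ and $n_{k}r_{k}=M$), combined with the elementary bound $\bigl(\sum_{i=1}^{n}|y_{i}|^{M}\bigr)^{1/M}\leq n^{1/M-1/p}\,\|y\|_{p}$ (which holds because $p>M$), yields $\|P_{n}\|\leq n^{1-M/p}$; the reverse inequality follows by testing $P_{n}$ on $x^{(k)}=(n^{-1/p},\ldots,n^{-1/p},0,\ldots)$, so the bound is in fact sharp. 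Inequality $(i)$ then gives $n^{1/s}\leq C\cdot n^{1-M/p}$, and since $n$ is arbitrary this forces $s\geq p/(p-M)$. The main conceptual hurdle is recognizing that the random sign construction from Theorem~\ref{mult} is too weak in this range and that the correct extremizer is the diagonal multipolynomial above; checking the H\"older-based upper bound on $\|P_{n}\|$ is the only nontrivial computation.
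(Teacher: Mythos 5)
Your proof is correct and takes essentially the same approach as the paper: the $(ii)\Rightarrow(i)$ direction is identical (pass to the $M$-homogeneous polynomial $\mathbf{Q}$ and apply (\ref{hlpo})), and for $(i)\Rightarrow(ii)$ the paper also tests on a diagonal extremizer with exactly $n$ unit coefficients whose norm is bounded by $n^{(p-M)/p}$ via H\"older. The only cosmetic difference is that the paper realizes this extremizer by composing the diagonal $M$-linear form $T_{n}(x^{(1)},\ldots,x^{(M)})=\sum_{i=1}^{n}x_{i}^{(1)}\cdots x_{i}^{(M)}$ with the block-splitting substitution, whereas you write the diagonal multipolynomial $\sum_{i=1}^{n}(x_{i}^{(1)})^{n_{1}}\cdots(x_{i}^{(m)})^{n_{m}}$ directly and bound its norm by a generalized H\"older inequality; both yield the same estimate and the same conclusion, and your observation that the Kahane--Salem--Zygmund construction underperforms in the range $M<p\leq 2M$ is accurate.
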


\begin{proof}
	$\left(  ii\right)  \Rightarrow\left(  i\right)  :$ We prove the assertion for
	$s_{0}=\frac{p}{p-M}.$ In fact, Consider the $\left(  n_{1}+\cdots
	+n_{m}\right)  $-homogeneous polynomial $Q:\ell_{p}\longrightarrow\mathbb{K}$
	given by%
	\[
	Q\left(  z\right)  :=P\left(  \left(  z_{j}\right)  _{j\in\mathbb{N}_{1}%
	},\ldots,\left(  z_{j}\right)  _{j\in\mathbb{N}_{m}}\right)
	\]
	being $\mathbb{N=N}_{1}%
	{\textstyle\bigcup}
	\cdots%
	{\textstyle\bigcup}
	\mathbb{N}_{m}$ is a disjoint union with $card\left(  \mathbb{N}_{j}\right)
	=card\left(  \mathbb{N}\right)  $, for $j=1,\ldots,m$ and
	\[
	\left(  x_{j}\right)  _{j\in\mathbb{N}_{k}}:=\left\{
	\begin{array}
	[c]{c}%
	x_{j},\text{ if }j\in\mathbb{N}_{k}\\
	0,\text{ otherwise.}%
	\end{array}
	\right.
	\]
	Note that $Q$ is a $M$- homogeneous polynomial. Moreover, $\left\Vert
	Q\right\Vert \leq\left\Vert P\right\Vert $ and%
	\[%
	{\textstyle\sum\limits_{\left\vert \beta\right\vert =n_{1}+\cdots+n_{m}}}
	\left\vert c_{\beta}\left(  Q\right)  \right\vert ^{s}=%
	{\textstyle\sum\limits_{\left\vert \alpha^{\left(  1\right)  }\right\vert
			=n_{1},\ldots,\left\vert \alpha^{\left(  m\right)  }\right\vert =n_{m}}}
	\left\vert c_{\alpha^{\left(  1\right)  }\ldots\alpha^{\left(  m\right)  }%
	}\left(  P\right)  \right\vert ^{s}%
	\]
	for all $s$. Hence, by polynomial Hardy-Littlewood inequality for $m<p\leq2m$,
	(\ref{hlpo}), (note that $m\leq M),$ there exists a constant
	$C_{\mathbb{K}\text{,},n_{1}+\cdots+n_{m},s_{0}}^{pol}\geq1$ such that
	\begin{align*}
	\left(
	{\textstyle\sum\limits_{\left\vert \alpha^{\left(  1\right)  }\right\vert
			=n_{1},\ldots,\left\vert \alpha^{\left(  m\right)  }\right\vert =n_{m}}}
	\left\vert c_{\alpha^{\left(  1\right)  }\ldots\alpha^{\left(  m\right)  }%
	}\left(  P\right)  \right\vert ^{s_{0}}\right)  ^{\frac{1}{s_{0}}}  &
	=\left(
	{\textstyle\sum\limits_{\left\vert \beta\right\vert =n_{1}+\cdots+n_{m}}}
	\left\vert c_{\beta}\left(  Q\right)  \right\vert ^{s_{0}}\right)  ^{\frac
		{1}{s_{0}}}\\
	&  \leq C_{\mathbb{K}\text{,},n_{1}+\cdots+n_{m},s_{0}}^{pol}\text{
	}\left\Vert Q\right\Vert \\
	&  \leq C_{\mathbb{K}\text{,},n_{1}+\cdots+n_{m},s_{0}}^{pol}\text{
	}\left\Vert P\right\Vert .
	\end{align*}

	$\left(  i\right)  \Rightarrow\left(  ii\right)  $ Let us assume now that $s$
	is such that for every $\left(  n_{1},\ldots,n_{m}\right)  $-homogeneous
	polynomial $P:\ell_{p}\times\cdots\times\ell_{p}\longrightarrow\mathbb{K}$ we
	have%
	\[
	\left(
	{\textstyle\sum\limits_{\left\vert \alpha^{\left(  1\right)  }\right\vert
			=n_{1},\ldots,\left\vert \alpha^{\left(  m\right)  }\right\vert =n_{m}}}
	\left\vert c_{\alpha^{\left(  1\right)  }\ldots\alpha^{\left(  m\right)  }%
	}\left(  P\right)  \right\vert ^{s}\right)  ^{\frac{1}{s}}\leq C\left\Vert
	P\right\Vert .
	\]
	Given a positive integer $n$, define the $M$-linear operator $T_{n}:$
	$\ell_{p}\times\cdots\times\ell_{p}\longrightarrow\mathbb{K}$ by%
	\[
	T_{n}\left(  x^{\left(  1\right)  },\ldots,x^{\left(  M\right)  }\right)  =%
	{\textstyle\sum\limits_{i=1}^{n}}
	x_{i}^{\left(  1\right)  }\ldots x_{i}^{\left(  M\right)  }%
	\]
	with $M=%
	{\textstyle\sum\limits_{j=1}^{m}}
	n_{j}.$ Define $P_{n}:\overset{m}{\overbrace{\ell_{p}\times\cdots\times
			\ell_{p}}}\longrightarrow\mathbb{K}$ by%
	\begin{align*}
	&  P_{n}\left(  x^{\left(  1\right)  },\ldots,x^{\left(  m\right)  }\right)
	\\
	&  =T_{n}\left(  \left(  x_{j}^{\left(  1\right)  }\right)  _{j\in
		\mathbb{N}_{1}^{\left(  1\right)  }},\ldots,\left(  x_{j}^{\left(  1\right)
	}\right)  _{j\in\mathbb{N}_{n_{1}}^{\left(  1\right)  }},\ldots,\left(
	x_{j}^{\left(  m\right)  }\right)  _{j\in\mathbb{N}_{1}^{\left(  m\right)  }%
	},\ldots,\left(  x_{j}^{\left(  m\right)  }\right)  _{j\in\mathbb{N}_{n_{m}%
	}^{\left(  m\right)  }}\right)
\end{align*}
where
\begin{align*}
\mathbb{N} &  \mathbb{=N}_{1}^{\left(  1\right)  }%
{\textstyle\bigcup}
\cdots%
{\textstyle\bigcup}
\mathbb{N}_{n_{1}}^{\left(  1\right)  }\\
&  \vdots\\
\mathbb{N} &  \mathbb{=N}_{1}^{\left(  m\right)  }%
{\textstyle\bigcup}
\cdots%
{\textstyle\bigcup}
\mathbb{N}_{n_{m}}^{\left(  m\right)  }%
\end{align*}
are disjoint unions with $card\left(  \mathbb{N}_{k}^{\left(  i\right)
}\right)  =card\left(  \mathbb{N}\right)  $, for $i=1,\ldots,m$ and
$k=1,\ldots,n_{i}.$ Note that $P_{n}$ is an $\left(  n_{1},\ldots
,n_{m}\right)  $-homogeneous polynomial and $\left\Vert P_{n}\right\Vert
\leq\left\Vert T_{n}\right\Vert $. Moreover, using H\"{o}lder's inequality it
is easily seen that $\left\Vert T_{n}\right\Vert \leq n^{\frac{p-M}{p}}$.  By hypothesis,%
\[
n^{\frac{1}{s}}\leq C\left\Vert P_{n}\right\Vert \leq C\left\Vert
T_{n}\right\Vert \leq Cn^{\frac{p-M}{p}}.
\]
Since $n$ is arbitrary, we get%
\[
\frac{1}{s}\leq\frac{p-M}{p}%
\]
and thus $s\geq\frac{p}{p-M}.$
\end{proof}


\begin{thebibliography}{9}
	
	
	\bibitem {Albuquerque}N. Albuquerque, F. Bayart, D. Pellegrino, J. B.
	Seoane-Sepulveda, \emph{Optimal Hardy--Littlewood type inequalities for
		polynomials and multilinear operators}, Israel J. Math. \textbf{211} (2016), 197--220.
	
	 \bibitem {araujo} G. Araujo, D. Pellegrino, \emph{ Optimal Hardy-Littlewood type inequalities for m-linear forms on $\ell_{p}$ spaces with $1\leq p \leq m$.} Arch. Math. (Basel) 105 (2015), no. 3, 285–295. 
	
	
	\bibitem {Bayart}F. Bayart, \emph{Maximum modulus of random polynomials}, Q.
	J. Math. \textbf{63} (2012), 21--39.
	
	\bibitem {bh}H. F. Bohnenblust, E. Hille, \emph{On the absolute convergence of
		Dirichlet series}, Ann. of Math. \textbf{32} (1931), 600-622.
	
	\bibitem {boo}G. Botelho, E. Torres, T. Velanga, Linearization of multipolynomials and applications, to appear in Archiv der Math.
	
	\bibitem {qqq}W. Cavalcante, D. Nunez-Alarcon, \emph{ Remarks on an inequality of Hardy and Littlewood}. Quaest. Math. 39 (2016), no. 8, 1101–1113. 
	
	\bibitem{Dimant} V. Dimant, P. Sevilla-Peris, \emph{Summation of coefficients of polynomials on $\ell_{p}$ spaces}, Publ. Mat. \textbf{60} (2016), 289-310.
	
	\bibitem {nunes}A. Nunes, \emph{A note on the Hardy--Littlewood inequality}, to appear in Math. Inequal. Appl.
	
	
	\bibitem{pt} D. Pellegrino, J. Santos, D. Serrano-Rodríguez, E. Teixeira, \emph{A regularity principle in sequence spaces and applications}. Bull. Sci. Math. 141 (2017), no. 8, 802–837.
	
	\bibitem {Velanga}T. Velanga, \emph{Ideals of polynomials between Banach
		spaces revisited}, to appear in  Linear Multilinear Algebra.
\end{thebibliography}
\end{document}